\newcommand\C{\mathbb C}
\newcommand\Q{\mathbb Q}
\newcommand\R{\mathbb R}
\newcommand\A{\mathbb A}
\renewcommand\P{\mathbb P}
\renewcommand\O{\mathcal O}
\newcommand{\Spec}{\operatorname{Spec}}
\newcommand{\DF}{\operatorname{DF}\,}
\newcommand{\CM}{\operatorname{CM}\,}
\newcommand{\Bl}{\operatorname{Bl}}
\newcommand{\init}{\operatorname{in}}
\newcommand\0{\{0\}}
\newcommand\cB{\mathcal B}
\newcommand\cZ{\mathcal Z}
\newcommand\cW{\mathcal W}
\newcommand\cR{\mathcal R}
\newcommand\cV{\mathcal V}
\newcommand\cF{\mathcal F}
\newcommand\cT{\mathcal T}
\newcommand\cN{\mathcal N}
\newcommand\cM{\mathcal M}
\newcommand\X{\mathcal X}
\renewcommand\L{\mathcal L}
\theoremstyle{definition}
\newtheorem{defn}{Definition}[section]
\newtheorem{rem}[defn]{Remark}
\theoremstyle{plain}
\newtheorem{thm}[defn]{Theorem}
\newtheorem{lem}[defn]{Lemma}
\newtheorem{prop}[defn]{Proposition}
\newtheorem{conj}[defn]{Conjecture}
\title[Minimizing CM degree and slope stability]
{\textbf{Minimizing CM degree and slope stability of projective varieties}}
\author{Kentaro Ohno}
\address{Graduate School of Mathematical Sciences, University of Tokyo, 3-8-1, Komaba,
Meguro-ku, Tokyo, 153-8914, Japan}
\email{ohken322@gmail.com}
\date{\today}
\keywords{CM degree, DF invariant, slope stability, K-stability}
\begin{document}


\begin{abstract}
We discuss a minimization problem of the degree of the CM line bundle among all possible fillings of a polarized family with fixed general fibers. We show that such minimization implies the slope semistability of the fiber if the central fiber is smooth.
\end{abstract}

\maketitle


\section{introduction}

We work over the complex number field $\C$ throughout this paper.
By a \emph{polarized variety} $(V,L)$, we mean a pair of a projective variety $V$ and an ample $\Q$-line bundle $L$ on $V$.
A \emph{polarized family} $ (\X,\L)\to C$ consists of a smooth projective curve $C$, a variety $\X$ with a projective flat morphism $\X \to C$, and a relatively ample $\Q$-line bundle $\L$ on $\X$.

The minimization of the degree of the CM line bundle (which we call the \emph{CM degree} or the \emph{DF invariant}) in a certain class of polarized families was considered in \cite{WX14} in the context of compactification problem of moduli space.
They observed that for a family of canonically polarized varieties with semi-log canonical singularities (named \emph{KSBA-stable} family after Koll\'{a}r-Shepherd-Barron \cite{KSB88}, and Alexeev \cite{Ale94}) over a punctured curve, the KSBA-stable compactification indeed minimizes the CM degree.
Moreover, \cite{Od13b} proved similar statements for families of other classes of polarized varieties such as Calabi-Yau varieties and Fano varieties with large alpha-invariant which are known to be K-stable.
By this observation, we expect that the K-stable compactification of Fano families should minimize the CM degree, which leads to the separatedness of \emph{K-moduli} \cite{Od13b}. 
Furthermore, in a private communication, Odaka told the author about the following conjecture which seems not yet to appear in the literature.

\begin{conj}[Odaka]\label{conj}
Let $(\X,\L = -K_\X) \to C$ be a family
polarized by the anti-canonical class over a smooth curve $C$ with a fixed closed point $0 \in C$.
Then, the fiber $\X_0$ over $0 \in C$ is K-semistable if and only if $ \CM (\X_{C'},\L_{C'}) \leq \CM (\X',\L') $ holds for any pointed curve $(C',0') \to (C,0)$ and any polarized family $(\X',\L') \to C'$ which is isomorphic to $(\X_{C'},\L_{C'})$ over $C'\setminus \{0'\}$, where $(\X_{C'},\L_{C'}) = (\X,\L) \times_C C'$.
Moreover, the strict inequality holds for any normal $(\X',\L')$ which is not isomorphic to the normalization of $(\X_{C'},\L_{C'})$ if and only if $\X_0$ is K-stable.
\end{conj}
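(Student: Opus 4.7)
The plan is to identify the CM degree difference $\CM(\X',\L') - \CM(\X_{C'},\L_{C'})$ with the Donaldson-Futaki invariant of an appropriate test configuration of $(\X_0, -K_{\X_0})$, so that K-semistability of $\X_0$ forces the difference to be nonnegative, and conversely a destabilizing test configuration can be glued into $\X$ to yield a filling with strictly smaller CM degree.

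For the forward direction I would localize at $0 \in C$ via base change and, after a finite cover $C' \to C$, take a common log resolution $\X''$ of $\X_{C'}$ and $\X'$. The Paul--Tian/Odaka intersection formula for the CM line bundle expresses $\CM(\X',\L') - \CM(\X_{C'},\L_{C'})$ as an intersection number supported on the exceptional divisors over $0'$. To interpret this number as a Donaldson-Futaki invariant I would run an equivariant relative MMP on $\X'$ over $C'$ (in the spirit of Li-Xu's stable-degeneration program and Odaka's equivariant semi-stable reduction) to produce a filling whose central fiber carries a $\G_m$-action while preserving the CM degree; the resulting equivariant degeneration is a test configuration of $(\X_0, -K_{\X_0})$ whose $\DF$ equals the CM difference, and is therefore $\geq 0$ by K-semistability.

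Conversely, given a test configuration $(\cT,\L_\cT)$ with $\DF(\cT) < 0$, I would glue $\cT$ to $\X \setminus \X_0$ along a formal punctured neighborhood, using flatness and normality to algebraize, producing a normal filling $(\X',\L')$ with $\X'_0 \cong \cT_0$; the CM computation on $\X'$ then recovers $\DF(\cT) < 0$. The main obstacle in both directions is the equivariant MMP step: for a general normal filling, constructing an equivariant degeneration with the same CM degree requires singularity control well beyond what the slope-stability argument of this paper needs, and is the reason the conjecture remains open. The K-stability versus K-semistability dichotomy should mirror the standard one between $\DF \geq 0$ and $\DF > 0$ on non-product test configurations: the hypothesis that $\X'$ is normal and not isomorphic to the normalization of $\X_{C'}$ should rule out the product (trivial) case, giving the strict inequality exactly in the K-stable regime.
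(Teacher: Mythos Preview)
The statement you are attempting to prove is labeled as a \emph{Conjecture} in the paper, and the paper does not contain a proof of it. There is therefore no ``paper's own proof'' to compare against. What the paper actually proves is the much weaker Theorem~\ref{thm1}: if the CM degree is minimized (without even requiring base change) and the central fiber $\X_0$ is \emph{smooth}, then $(\X_0,\L_0)$ is \emph{slope} semistable, not K-semistable. The paper's method is to blow up $\X$ along a subscheme $Z\subset\X_0$, degenerate this blow-up to the union of $\X$ with the compactified deformation-to-the-normal-cone test configuration $\overline{\cT_Z}$, and read off $\DF(\cT_Z,\L_{Z,c})=\CM(\cB,\cM)-\CM(\X,\L)\ge 0$ from flatness of the degeneration plus the minimization hypothesis. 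This handles only test configurations arising from deformation to the normal cone, hence only slope semistability.

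Your proposal is a plausible high-level outline for the full conjecture, and you correctly identify the essential difficulty: converting an \emph{arbitrary} normal filling $(\X',\L')$ into a genuine $\G_m$-equivariant test configuration with the same CM-degree difference requires an equivariant MMP/specialization step that is not available in the paper and that you yourself flag as ``the reason the conjecture remains open.'' So your write-up is not a proof but a strategy sketch with an acknowledged gap at the crucial point. Note also that the paper's postscript records that one direction (K-semistable $\Rightarrow$ CM-minimizing) was subsequently established by Blum--Xu and independently by C.~Li and X.~Wang; the converse direction, and the K-stable strict-inequality refinement, are not claimed to be proved anywhere in the paper.
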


We give a remark on the existence of the filling which minimizes the CM degree.
It has been expected (and recently proved in \cite{BLX}) that the K-semistability is the open condition.
So Conjecture \ref{conj}, if true, implies that the CM-minimizer does not exist if a general fiber is not K-semistable.

The aim of this paper is to investigate the relation between the minimization problem of the CM degree and the K-stability to approach the above conjecture. 
In particular, our main theorem is the following.

\begin{thm}\label{thm1}
Let $ (\X,\L) \to C $ be a polarized family and $ (\X_0,\L_0)$ be the fiber over a closed point $ 0 \in C$.
Assume that $\X_0$ is a smooth variety and that the inequality $ \CM (\X,\L) \leq \CM (\X',\L') $ holds for any polarized family $(\X',\L') \to C$ isomorphic to $ (\X,\L) $ over $C\setminus \0$. 
Then, the fiber $(\X_0,\L_0)$ is slope semistable.
\end{thm}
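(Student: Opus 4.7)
My approach is to adapt the Ross--Thomas slope test-configuration construction to the present family setting: for each proper subscheme $Z \subsetneq \X_0$ of the central fiber, I will build a competing polarized family $(\X',\L') \to C$ isomorphic to $(\X,\L)$ away from $0$, such that the CM-minimality hypothesis, applied to this family, reduces to the slope inequality along $Z$. Since $\X_0$ and $C$ are smooth, $\X$ is smooth in a neighborhood of $\X_0$, so I may view $Z$ as a subscheme of $\X$ via $\X_0 \hookrightarrow \X$ and form the blowup $\pi \colon \X' := \Bl_Z \X \to \X$ with exceptional Cartier divisor $E$. Because $Z$ is supported in the fiber over $0$, so is $E$, and $\pi$ restricts to an isomorphism over $C \setminus \0$. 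For any rational $c$ in the open interval $(0, \epsilon(Z;\L_0))$ (Seshadri constant of $Z$ in $(\X_0,\L_0)$), the $\Q$-line bundle $\L' := \pi^*\L - cE$ is relatively ample over $C$, making $(\X',\L') \to C$ into a polarized family isomorphic to $(\X,\L)$ over $C \setminus \0$.

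Plugging $(\X',\L')$ into the hypothesis yields $\CM(\X,\L) \leq \CM(\X',\L')$. The core of the proof is then to expand
$$\Phi(c) := \CM (\X',\L') - \CM (\X,\L)$$
as a polynomial in $c$ and identify it with the Ross--Thomas slope polynomial of $Z \subset (\X_0,\L_0)$. Using the intersection-theoretic formula for the CM degree, the projection formula, the discrepancy identity $K_{\X'/C} = \pi^* K_{\X/C} + aE$ (with $a = r-1$ for a smooth $Z$ of codimension $r$ in $\X$, or an appropriate generalization), and the fact that $E$ lies in the central fiber, each term of $\Phi(c)$ collapses to an intersection number supported on $\X_0$ or on the exceptional divisor. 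A coefficient-by-coefficient comparison should show that $\Phi(c)$ is a positive multiple of the Ross--Thomas slope polynomial for $Z$. Since $\Phi(c) \geq 0$ for every rational $c$ in the Seshadri range, the slope inequality holds for $Z$; as $Z$ was arbitrary, $(\X_0,\L_0)$ is slope semistable.

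The main obstacle is the intersection-number computation in the second step, particularly handling $Z$ that is non-reduced or singular, where the naive discrepancy formula $K_{\X'/\X} = (r-1)E$ fails. I would address this either by reducing to the smooth case via a flag-ideal argument, or by working with Fulton-style excess intersection so that the contributions of the powers $E^k$ are expressed via Segre classes of $Z$ in $\X_0$. Matching normalization constants and signs between the CM-degree convention used here and the Ross--Thomas slope polynomial is the principal bookkeeping task, but it should parallel the well-established test-configuration computations for slope stability on a single variety.
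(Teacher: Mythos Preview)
Your competing family $(\X',\L')=(\Bl_Z\X,\,\pi^*\L(-cE))$ is exactly the family $(\cB,\cM)$ the paper builds, so the overall strategy matches. Where you diverge is in establishing the key identity $\Phi(c)=\DF(\cT_Z,\L_{Z,c})$. You propose a direct intersection-theoretic expansion (projection formula, discrepancies, Segre classes), whereas the paper avoids any such computation: it constructs a flat degeneration of $\cB$ to $\hat\X\cup_{\X_0}\overline{\cT_Z}$ by first deforming $\X$ to the normal cone of $\X_0$ and then blowing up the strict transform of $Z\times\A^1$, and reads off the equality of Euler-characteristic polynomials from flatness (Proposition~\ref{CMeq}). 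This gives $\Phi(c)=\DF(\cT_Z,\L_{Z,c})$ on the nose, uniformly for every closed subscheme $Z\subset\X_0$. The smoothness hypothesis enters only through a commutative-algebra lemma (Lemma~\ref{lemma2}: $I^m\cap I_{\X_0}^{\,j}=I_{\X_0}^{\,j}I^{m-j}$ in a regular local ring), which ensures the degenerate fiber acquires no extra components.

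The difficulty you flag for singular or non-reduced $Z$ is real, and neither a flag-ideal reduction nor a generic Segre-class formalism resolves it for free: what you ultimately need is that $C_{Z/\X}$ and $C_{Z\times\{0\}/\X_0\times\A^1}$ (and all their thickenings) agree, and this is precisely what the identity $\widetilde{I^m}=(\tilde I)^m$ encodes. So your route, once made rigorous, would re-derive the content of Lemmas~\ref{lemma3}--\ref{lemma2} in disguise. The paper's degeneration argument is therefore both shorter and more robust; as a side benefit it also yields the relative ampleness of $\cM$ over $C$ (which you assert without justification) via openness of ampleness in the flat family $\cW$.
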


The notion of the slope stability of polarized varieties is introduced in \cite{RT07} as a weak version of the K-stability, that is, the K-stability for a special class of test configurations obtained by a deformation to the normal cone.
In comparison with Conjecture \ref{conj}, we note that Theorem \ref{thm1} holds for not only Fano families, but also \emph{any} polarized families, although we assume the smoothness of the central fiber.
Also, note that the minimization assumption in Theorem \ref{thm1} is weaker than that in Conjecture \ref{conj} in the sense that we do not need the minimization over base changes in Theorem \ref{thm1}.

\medskip
\noindent {\bf Sketch of the proof of the main theorem}. 

Let $ Z \subset \X_0 $ be a proper closed subscheme and $ c \in (0, \epsilon(Z, \L_0) ) $ be a rational number, where $\epsilon(Z,\L_0)$ is the Seshadri constant of $Z$ with respect to $\L_0$.
Take the deformation to normal cone over $Z$ 
\[
\pi_Z: \cT_Z = \Bl_{Z \times \0} ( \X_0 \times \A^1) \to \X_0 \times \A^1
\]
polarized by a relatively ample $\Q$-line bundle $ \L_{Z,c} = \pi_Z^* p_1^* \L_0 (- cE_Z) $, where $ p_1 : \X_0 \times \A^1 \to \X_0$ is the first projection and $E_Z$ is the Cartier exceptional divisor. Let $\overline{\cT_Z} \to \P^1$ be the natural compactification of $\cT_Z \ \to \A^1$.
We need to show the inequality $ \DF(\cT_Z, \L_{Z,c}) \geq 0 $ in order to prove slope semistability of the central fiber $ (\X_0, \L_0) $.

To show the inequality, we define another polarized family $(\cB, \cM) \to C$ by
\[
\pi : \cB = \Bl_Z \X \to \X , \ \cM = \pi^* \L ( -c E)
\]
where $E$ is the Cartier exceptional divisor.
We relate the difference of the CM degree $\bigl( \CM(\cB, \cM) - \CM(\X,\L) \bigr)$ to $\DF(\cT_Z, \L_{Z,c})$ by making use of a degeneration technique as follows.
First we take the deformation of $\X$ to the normal cone $\X_0 \times \A^1$ of $\X_0$, that is, blow up $\X \times \A^1$ along $\X_0 \times \0$.
Let $\cZ$ be the strict transform of $ Z \times \A^1 \subset \X \times \A^1$.
Then, the blow-up of the total family along $\cZ$ gives a deformation of $\cB$ to $\overline{\cT_Z} \cup_{\X_0} \X$.
Although there may exist exceptional divisors of the blow-up contained in the central fiber of the deformation in general,
we show that the smoothness of $\X_0$ ensures there are no such exceptional divisors.
Then we have the equality
\[
\CM(\cB,\cM) - \CM(\X,\L) = \DF(\cT_Z, \L_{Z,c})
\]
by the flatness.
Thus, by the using minimizing assumption, we get the inequality
\[
\DF(\cT_Z, \L_{Z,c}) \geq 0
\] 
to reach the conclusion.

	\medskip
	\noindent{\bf A postscript note:}
	Soon after the first version of this article had been posted on the arXiv, the author was informed that Blum and Xu \cite{BX} proved the separatedness of the K-moduli, which had been the original motivation for our study.
	Moreover, they told the author that they proved one direction of Conjecture \ref{conj} and that C.Li and X. Wang independently obtained the same result;  the K-semistable filling implies the CM-minimization.
	
	\medskip
	\noindent{\bf Acknowledgements:}
	The author would like to express great gratitude to his supervisor Yoshinori Gongyo for his continuous encouragement and valuable advice.
	He is also deeply grateful to Yuji Odaka for sharing his problem, for teaching him the backgrounds and the related notions, and for his warm encouragement.
	The author is supported by the FMSP program at the University of Tokyo.

\section{Preliminaries}

The aim of this section is to recall some definitions and related results used in the proof of the main theorem.

\subsection{Test configurations and the DF invariant}
In this subsection, we recall the definition of test configurations and the DF invariant, which appear in the definition of K-stability.
\begin{defn}
A \emph{test configuration} $ (\X, \L) $ for a polarized variety $ (V,L) $ consists of the following data: 
\begin{enumerate}
  \item A variety $\X$ admitting a projective flat morphism $ f :\X \to \A^1 $,
  \item An $f$-ample $\Q$-line bundle $\L$ on $\X$, 
  \item A $ \C^* $-action on $ (\X,\L ) $ compatible with the natural $\C^*$-action on $\A^1$ via $f$,
\end{enumerate}
  such that the restriction $(\X, \L)|_{\C^*}$ over $\C^*$ is $\C^*$-equivariantly isomorphic to $(V,L) \times \C^*$.

If we only assume that $\L$ is $f$-semiample instead of $f$-ample, then $(\X,\L)$ is called a \emph{semi-test configuration}.
A test configuration $ (\X,\L)$ is said to be \emph{trivial} if $\X$ is equivariantly isomorphic to the trivial family $ V \times \A^1 $ with the trivial action on the first factor $V$.
\end{defn}

Given a test configuration $ (\X,\L) $ for an $n$-dimensional polarized variety $ (V,L) $, there is an $ \C^* $-action on $ H^0( \X_0, \L_0^k) $ for a sufficiently divisible positive integer $k$ induced by that on the central fiber $ (\X_0, \L_0 ) $. 
If we decompose the $ \C $-vector space $ H^0( \X_0, \L_0^k) $ into eigenspaces with respect to the action of $\C^*$,
the eigenvalues can be written as some power of $ t \in \C^* $.
We call the exponent as the \emph{weight} of the action on each eigenvector.
The \emph{total weight} $ w(k) $ is the sum of the weight over the eigenbasis.
By the equivariant Riemann-Roch theorem, we have an expansion
\begin{align}
w(k) = w_0 k^{n+1} + w_1 k^n + O(k^{n-1}).	\label{weight}
\end{align}
Also we write an expansion of $ \chi (V,L) $
\[
\chi (V,L^k) = a_0 k^n + a_1 k^{n-1} + O(k^{n-2})
\]
for sufficiently divisible $k$.

\begin{defn} (\cite{Don02})
In the above notation, the \emph{Donaldson-Futaki invariant} for a test configuration $ (\X, \L) $ is defined as
\[
\DF (\X,\L) = a_1 w_0 - a_0 w_1.
\]
\end{defn}

Note that we can naturally extend the definition of the Donaldson-Futaki invariant to arbitrary \emph{semi}-test configurations (see \cite{RT07}).

We do not use the following definition of K-stability in the proof of Theorem \ref{thm1}, but we introduce it to clarify the motivation of our study.

\begin{defn}\label{K-stab}
(\cite{Don02}, see also \cite{Sto11})
A polarized variety $ (V,L) $ is 
\begin{itemize}
  \item \emph{K-semistable} if the Donaldson-Futaki invariant $ \DF (\X,\L) $ is nonnegative for any test configuration $(\X,\L)$ for $(V,L)$.
  \item \emph{K-polystable} if it is K-semistable and $ \DF (\X,\L) =0 $ only if $\X$ is isomorphic to $ V \times \A^1 $ outside some closed subset of codimension at least 2.
  \item \emph{K-stable} if it is K-semistable and $ \DF (\X,\L) =0 $ only if $\X$ is $\C^*$-equivariantly isomorphic to $ V \times \A^1 $ with the natural $\C^*$-action outside some closed subset of codimension at least 2.
\end{itemize}

\end{defn}

Note that we assume non-triviality in codimension 1 of test configurations in the definition of K-(poly)stability  \cite{LX14,Sto11}. If $V$ is normal, we only need to consider non-trivial normal test configurations for K-(semi)stability since the Donaldson-Futaki invariant does not increase by normalization \cite[Remark 5.2]{RT07}.

\subsection{the CM degree}

In this paper, we only need to treat the degree of the CM line bundle over a curve, which we define a priori as follows.
For more details, we refer to \cite{FR06}.

\begin{defn}\label{CM}
For a polarized family $(\X,\L) \to C$ with a fiber $(\X_t,\L_t)$ of dimension $n$,
write 
\begin{align}
\chi (\X_t,\L_t^k) = a_0 k^n + a_1 k^{n-1} + O(k^{n-2})	\label{a}	\\
\chi (\X,\L^k) = b_0 k^{n+1} + b_1 k^n + O(k^{n-1})		\label{b}
\end{align}
for sufficiently divisible positive integer $k$.
The coefficient $ a_i $ is independent of the choice of a fiber since $ \chi $ is constant over a flat family.
Let $g(C)$ denote the genus of $C$.
Then the \emph{CM degree} is defined as
\[
\CM(\X,\L) = a_1 b_0 - a_0 b_1 + (1-g(C))a_0^2.
\]
\end{defn}

This value is nothing but the degree of the CM line bundle $\lambda_{CM}$\cite{PT06,FS90} of $ L $ on $ C $.

\begin{rem}\label{DFisCM}
Given a normal test configuration $(\X,\L) \to \A^1$ for a normal polirized variety $(V,L)$, let 
$
(\overline{\X}, \overline{\L} ) \to \P^1
$
denote the natural $ \C^*$-equivariant compactification, that is, we add the trivial fiber $ (V,L) \times \{ \infty \} $ over $ \infty \in \P^1 $.  
Then it is well known (see for example \cite{Od13, BHJ}) that the total weight $ w(k) $ on $ H^0(\X_0, \L_0 ) $ can be written as
\[
w(k) =\chi (\overline{\X}, \overline{\L}^k) - h^0 (\X_0, \L_0^k).
\]
Using the asymptotic Riemann-Roch formula, we get the equalities
\[
w_0 = b_0, \ w_1 = b_1 - a_0
\]
using the notation in (\ref{weight}).
Thus, the CM degree of $ (\overline{\X}, \overline{\L} ) $ coincides with the Donaldson-Futaki invariant of $ (\X,\L) $.
In this viewpoint, the CM degree is often called the Donaldson-Futaki invariant, too.
\end{rem}

\subsection{Slope stability}
In this subsection, we recall the notion of the slope semistability of polarized varieties introduced in \cite{RT07}. 

Let $ (V, L) $ be an $n$-dimensional polarized variety. Write
\[
\chi ( V, L^k) = a_0 k^n + a_1 k^{n-1} + O(k^{n-2})
\]
for sufficiently divisible positive integer $k$.
Then the \emph{slope of $(V,L)$} is defined as
\[
\mu (V, L) = \frac{a_1}{a_0}.
\]
Let $Z \subset V$ be a proper closed subscheme defined by an ideal $ I_Z $ and take the blow-up along $Z$
\[
\sigma : \hat{V} = \Bl_Z V \to V.
\]
Let $E$ be the Cartier exceptional divisor defined by $I_Z$ on $\hat{V}$ then the \emph{Seshadri constant} $\epsilon ( Z, L) $ of $Z$ with respect to $L$ is defined as
\[
\epsilon = \epsilon (Z,L) := \sup \{ x > 0 \mid \sigma^* L (- xE) : \textrm{ample} \}.
\]
For a  rational number $x \in (0, \epsilon (Z,L)]$, write
\[
\chi ( \hat{V} , (\sigma^* L (-xE))^k) = a_0(x) k^n + a_1(x)k^{n-1} + O(k^{n-2})
\]
for a sufficiently divisible $k$.
Here, $a_0(x)$ and $a_1(x)$ are polynomials of $x$.
Then the \emph{slope along $Z$} with respect to $c \in ( 0, \epsilon ] \cap \Q $ is defined as
\[
\mu_c (I_Z, L) = \frac{ \int^c_0 ( a_1 (x) + \frac{a'_0 (x)}{2} ) dx }{ \int^c_0 a_0 (x) dx }.
\]

\begin{defn} (\cite{RT07}) 
$ (V,L) $ is \emph{slope semistable} if the inequality
\[
\mu (V,L) \geq \mu_c (I_Z, L)
\]
holds for any proper closed subscheme $Z \subset V$ and any rational number $c \in (0,\epsilon] $.
\end{defn}

The slope semistability is a (strictly) weaker notion than the K-semistability as in Theorem \ref{slopeisDF}.
To see this, first take a deformation to the normal cone over $Z$
\[
\pi : \cT_Z = \Bl_{Z \times \0} (V \times \A^1) \to V \times \A^1 
\]
and let $F$ be the Cartier exceptional divisor.
We define a $\Q$-line bundle $\L_{Z,c} := \pi^* p_1^* L (-cF) $ for $c \in (0,\epsilon] \cap \Q$. 

\begin{lem}\label{lem1}
In the above setting,
$\L_{Z,c}$ is ample over $\A^1$ for $c \in (0, \epsilon)$ 
where $p_1 : V \times \A^1 \to V$ is the first projection.
Moreover, $ L_{Z,\epsilon} $ is semiample over $\A^1$ if $ \sigma^* L (-\epsilon E) $ is semiample.
\end{lem}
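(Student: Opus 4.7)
The plan is to check both claims fiber-by-fiber over $\A^1$: for the projective morphism $\cT_Z \to \A^1$, Grothendieck's criterion identifies relative ampleness with ampleness on every closed fiber (EGA III, 4.7.1), and we treat the semiample statement by an analogous but more delicate argument. Over $t \neq 0$ the blow-up is an isomorphism, so the fiber is $(V,L)$ and nothing needs to be checked.

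The central fiber is the reduced union $\hat V \cup F$, where $\hat V \cong \Bl_Z V$ is the strict transform of $V \times \0$ and $F$ is the exceptional divisor of the blow-up, meeting along $E$, the exceptional divisor of $\sigma : \hat V \to V$. Using $\O(F)|_{\hat V} = E$, $\O(F)|_F = \O_F(-1)$, and the evident restrictions of $\pi^* p_1^* L$, we obtain
\[
\L_{Z,c}|_{\hat V} = \sigma^* L - cE, \qquad \L_{Z,c}|_F = q^*(L|_Z) + c\, \O_F(1),
\]
where $q : F \to Z$ is the natural projection. For $c \in (0, \epsilon(Z,L))$ the first restriction is ample on $\hat V$ directly from the definition of the Seshadri constant. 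Ampleness of the second on $F$ follows from Kleiman's criterion: on $q$-fiber curves the $q$-ample $\O_F(1)$ suffices; on curves $C \not\subset E$ projecting to a curve in $Z$, the identity $\O_F(E) = \O_F(1)$ rewrites $\L_{Z,c}|_F \cdot C = L|_Z \cdot q_*C + c\, E \cdot C > 0$; and on $C \subset E$ the compatibility $\L_{Z,c}|_F|_E = \L_{Z,c}|_{\hat V}|_E$ transfers ampleness from $\hat V$. Since ampleness on a reduced projective scheme is a componentwise property, $\L_{Z,c}$ is $\A^1$-ample.

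For the semiample case at $c = \epsilon$, the analysis of the $\hat V$-component uses the hypothesis directly, while on $F$ the restriction remains nef and is strictly positive on all curves except possibly those in $E$. A sufficiently divisible power $N\L_{Z,\epsilon}$ is then base-point-free on each component of the central fiber with restrictions compatible along $E$; flatness of $\cT_Z$ over $\A^1$ combined with cohomology-and-base-change upgrades this fiberwise base-point-freeness to relative base-point-freeness over $\A^1$, giving the desired relative semiampleness.

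The main obstacle in both cases is the analysis on curves contained in the common divisor $E$, where the positivity on $F$ is not independently visible: it has to be transferred from the Seshadri (respectively semiampleness) input on the $\hat V$ side via the two embeddings of $E$ into $\cT_Z$.
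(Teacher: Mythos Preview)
The paper gives no argument of its own here and simply cites \cite[Proposition~4.1]{RT07}; your fiber-by-fiber strategy and the identifications $\L_{Z,c}|_{\hat V}=\sigma^*L-cE$ and $\L_{Z,c}|_F=q^*(L|_Z)+c\,\O_F(1)$ match what Ross--Thomas do, so the overall shape is right and your reduction to the two components of the central fiber is valid.

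The genuine gap is your ampleness argument on $F$. You invoke ``Kleiman's criterion'' but only check $\L_{Z,c}|_F\cdot C>0$ for irreducible curves $C$; that is strict nefness, which does \emph{not} imply ampleness in general. What your curve computation does legitimately give is that $\L_{Z,c}|_F$ is nef. To finish, apply Nakai--Moishezon to an arbitrary integral $W\subset F$ of dimension $d$: if $W\subset E$, use that $\L_{Z,c}|_E=(\sigma^*L-cE)|_E$ is ample; if $W\cap E=\emptyset$, then $W$ lies in $F\setminus E=\{t\neq0\}$, which is affine over $Z$ since $\O_F(E)=\O_F(1)$ is $q$-ample, so $q|_W$ is finite and $(\L_{Z,c}|_F)^d\cdot W=(L|_Z)^d\cdot q_*W>0$; otherwise expand $(\L_{Z,c}|_F)^d\cdot W=(\L_{Z,c}|_F)^{d-1}\cdot q^*(L|_Z)\cdot W+c\,(\L_{Z,c}|_E)^{d-1}\cdot(E\cdot W)$, where the first summand is $\ge0$ by nefness and the second is $>0$ since $E\cdot W$ is a nonzero effective cycle on $E$. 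Your semiample paragraph is likewise incomplete: componentwise base-point-freeness does not automatically glue along $E$, and ``cohomology and base change'' does not by itself turn fiberwise base-point-freeness into relative base-point-freeness without a surjectivity of restriction you have not supplied; one has to actually exhibit enough global sections of $k\L_{Z,\epsilon}$ on $\cT_Z$---they all come from $H^0\bigl(V\times\A^1,\ p_1^*L^k\otimes(I_Z+(t))^{k\epsilon}\bigr)$---and verify directly that they have no common zero on the central fiber.
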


\begin{proof}
See \cite[Proposition 4.1]{RT07}.
\end{proof}

Thus, we can see $(\cT_Z,  \L_{Z,c} )$ as a (semi-)test configuration of $(V,L)$ for $ c \in (0, \epsilon)$ and for $c = \epsilon$ if $\sigma^* L(-\epsilon E)$ is semiample.

\begin{thm}\label{slopeisDF}
In the above notation, the Donaldson-Futaki invariant $ \DF(\cT_Z, \L_{Z,c} ) $ is a positive multiple of $ \bigl( \mu (V,L) - \mu_c (I_Z,L) \bigr) $ for any rational number $c \in (0, \epsilon )$ and for $c = \epsilon$ if $\sigma^* L(-\epsilon E)$ is semiample.
In particular, $(V,L)$ is slope semistable if it is K-semistable.
\end{thm}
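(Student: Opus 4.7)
By Remark \ref{DFisCM}, the first task is to rewrite $\DF(\cT_Z,\L_{Z,c})$ as the CM degree of the natural $\C^*$-equivariant compactification $(\overline{\cT_Z},\overline{\L_{Z,c}})\to\P^1$, where $\overline{\cT_Z}=\Bl_{Z\times\0}(V\times\P^1)$ and $\overline{\L_{Z,c}} = \pi^*p_1^*L - cF$, with $F$ the exceptional divisor of $\pi$. Since $g(\P^1)=0$, I have the identity $\DF(\cT_Z,\L_{Z,c})=a_1 b_0 - a_0 b_1 + a_0^2$, where $a_0,a_1$ come from $\chi(V,L^k)$ and $b_0,b_1$ from $\chi(\overline{\cT_Z},\overline{\L_{Z,c}}^k)$. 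The plan is therefore to obtain integral formulas for $b_0$ and $b_1$ in terms of $a_0(x),a_1(x)$, and then to match the resulting expression with the numerator of $\mu(V,L)-\mu_c(I_Z,L)$.

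For the integral formulas, I would telescope the short exact sequences
\[
0 \to \O_{\overline{\cT_Z}}(-(j+1)F) \to \O_{\overline{\cT_Z}}(-jF) \to \O_F(-jF|_F) \to 0,
\]
tensored with $\pi^*p_1^*L^k$, to get
\[
\chi(\overline{\cT_Z},\overline{\L_{Z,c}}^k) = \chi(V\times\P^1, p_1^*L^k) - \sum_{j=0}^{ck-1}\chi\bigl(F, \pi^*p_1^*L^k|_F\otimes\O_F(-jF|_F)\bigr),
\]
using $R\pi_*\O_{\overline{\cT_Z}}=\O_{V\times\P^1}$. The first term expands as $2 a_0 k^n + 2 a_1 k^{n-1} + O(k^{n-2})$. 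The sum can in turn be identified, via the exact same telescoping applied on $\hat V = \Bl_Z V$ for the family $\sigma^*L^k(-jE)$, with the discrete sum
\[
\sum_{j=0}^{ck-1}\bigl[\chi(\hat V,\sigma^*L^k) - \chi(\hat V, \sigma^*L^k(-jE))\bigr]
\]
up to a correction supported on the strict transform of $V\times\0$. Applying the Euler--Maclaurin formula to convert the sum into an integral and using the polynomiality of $\chi(\hat V,(\sigma^*L(-xE))^k) = a_0(x)k^n + a_1(x)k^{n-1} + O(k^{n-2})$ in $x$, this sum contributes $k\int_0^c a_0(x)\,dx\cdot k^n + k\int_0^c a_1(x)\,dx\cdot k^{n-1}$ at leading orders, while the endpoint/trapezoidal correction of Euler--Maclaurin is precisely what produces the $\tfrac12 a_0'(x)$ term.

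Collecting the coefficients, I expect to obtain
\[
b_0 = \int_0^c a_0(x)\,dx, \qquad b_1 = \int_0^c \Bigl(a_1(x) + \tfrac{a_0'(x)}{2}\Bigr) dx + a_0,
\]
so that
\[
\DF(\cT_Z,\L_{Z,c}) = a_1 b_0 - a_0 b_1 + a_0^2 = a_1\!\!\int_0^c\! a_0(x)\,dx - a_0\!\!\int_0^c\!\!\Bigl(a_1(x)+\tfrac{a_0'(x)}{2}\Bigr)dx.
\]
Dividing by $\int_0^c a_0(x)\,dx>0$ recovers exactly $a_0\bigl(\mu(V,L)-\mu_c(I_Z,L)\bigr)$, giving the claimed positive-multiple relation. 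The K-semistability implication is then immediate for $c\in(0,\epsilon)\cap\Q$ by Lemma \ref{lem1}, and for $c=\epsilon$ with $\sigma^*L(-\epsilon E)$ semiample by taking limits (or by the extension of DF to semi-test configurations).

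The main obstacle is the bookkeeping in the second paragraph: the sum $\sum_{j=0}^{ck-1}\chi(F,\cdot)$ is not literally the same as the corresponding sum on $\hat V$, because $F$ is a $\P^1$-thickening of the exceptional divisor of $\sigma$ (it is isomorphic to $\P(\cN_{Z/V}\oplus\O_Z)$ rather than $\P(\cN_{Z/V})$). Identifying these contributions asymptotically, and tracking carefully where the Todd-class correction $a_0'(x)/2$ comes from, is the technical heart of the proof.
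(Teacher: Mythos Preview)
The paper does not actually prove this statement: its entire proof is the single sentence ``See \cite[Section 4]{RT07}.'' What you have written is a reconstruction of the Ross--Thomas computation referenced there, and the overall architecture---telescoping via short exact sequences on the exceptional divisor, converting the resulting Riemann sums to integrals by Euler--Maclaurin, and matching the outcome with the slope formula---is the correct one. Your final displayed identity
\[
\DF(\cT_Z,\L_{Z,c}) \;=\; a_1\!\int_0^c a_0(x)\,dx \;-\; a_0\!\int_0^c\!\Bigl(a_1(x)+\tfrac{a_0'(x)}{2}\Bigr)dx
\]
is exactly the formula in \cite{RT07}, and the positive multiple is $a_0\int_0^c a_0(x)\,dx$.

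Two remarks. First, a slip: $\chi(V\times\P^1,p_1^*L^k)=\chi(V,L^k)\cdot\chi(\P^1,\O_{\P^1})=\chi(V,L^k)$, not $2a_0k^n+\cdots$, since $\chi(\P^1,\O_{\P^1})=1$; this term is only of order $k^n$ and feeds into $b_1$, not $b_0$ (your stated $b_0,b_1$ are nevertheless the right ones---the ``$+a_0$'' in your $b_1$ is precisely this contribution). Second, the obstacle you flag at the end is genuine: $F\cong\P(C_{Z/V}\oplus\O_Z)$ is not the exceptional divisor $E\cong\P(C_{Z/V})$ of $\sigma$, and matching the two sums asymptotically on the compactified total space is awkward. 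In \cite{RT07} this is sidestepped entirely by computing the weight $w(k)$ directly from the $\C^*$-filtration $H^0(V,L^k)\supset H^0(V,L^k\otimes I_Z)\supset\cdots\supset H^0(V,L^k\otimes I_Z^{ck})$ on the general fibre, which gives $w(k)$ as a sum of $h^0(\hat V,\sigma^*L^k(-jE))$ (equivalently $\chi$ for $k\gg0$) without ever passing through $F$; you may find that route cleaner than the one you sketch.
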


\begin{proof}
See \cite[Section 4]{RT07}.
\end{proof}

\begin{rem}
As in \cite{PR09}, a blow-up of $\P^2$ at two points is slope semistable, although it is not K-semistable. So this example shows that the slope semistability is indeed strictly weaker than the K-semistability.
\end{rem}

\section{Deformation to test configurations}\label{deform}

We fix a polarized family $(\X,\L) \to C$ such that the fiber $(\X_0,\L_0)$ over a fixed closed point $0 \in C$ is a variety.
The aim of this section is to construct a deformation of another polarized family over $\X$ to a test configuration of the central fiber $(\X_0, \L_0)$, and compare their CM degrees.

\subsection{Construction}\label{construct}

We refer to \cite{Ful} for a detailed description of a deformation to the normal cone, which we use for the construction.

First we take a deformation to the normal cone over $ \X_0 $
\[
\sigma : \cV = \Bl_{\X_0 \times \0} ( \X \times \A^1 ) \to  \X \times \A^1 .
\]
Then the central fiber $\cV_0$ of $\cV \to \A^1$ can be written as a union
\[
\cV_0 = \hat{\X} \bigcup_{\X_0} P 
\]
glued along $\X_0$.
Here $\hat{\X} \cong \X$ is the strict transform of $\X \times \0$ and $P$ is the exceptional divisor.
Note that since the normal bundle of $\X_0 \times \0$ is trivial, $P$ is isomorphic to $ \X_0 \times{\P}^1$
and so has a natural $\C^*$-action induced by that on $\P^1$. 
$P$ is glued to $\hat{\X}$ along one of the $\C^*$-invariant fiber $\X_0 \times \{ \infty \} \subset \X_0 \times{\P}^1 \cong P$.
Also we remark that $\cV$ admits a natural flat morphism to a surface $\Bl_{(0,0)}(C\times \A^1)$ by the universal property of blow-ups.

Consider a closed subscheme $Z \subset \X$ set-theoretically supported in $\X_0$.
Let $ \cZ $ be the strict transform of $ Z \times \A^1 \subset \X \times \A^1$ on $\cV$.
Then $ \cZ $ gives a flat degeneration of $Z \subset \X$ to a $\C^*$-invariant closed subscheme $Z_0 \subset P $ by Lemma \ref{degen-ideal} below.
We take the blow-up along $\cZ$
\begin{align}
\Pi : \cW = \Bl_{\cZ} \cV \to \cV	\label{totalfamily}
\end{align}
and let $G$ be the Cartier exceptional divisor. 
Identify the general fiber of $\cV \to \A^1$ with $\X$ and let
\begin{align*}
\pi_0 	&: \cT = \Bl_{Z_0} P \to P,	\\
\pi 	&: \cB = \Bl_{Z} \X \to \X
\end{align*}
denote the strict transform of $P$ and $\cV_t \cong \X$ on $\cW$ respectively, 
and 
\begin{align*}
E_0 	&= G |_\cT,	\\
E 	&= G |_\cB
\end{align*}
be each Cartier exceptional divisor.
We have the following diagram:

\[
\xymatrix@M=8pt{
E \subset	\cB	\ar[r]^-\pi \ar@{^{(}-_>}@<2.5ex>[d]	& \X  \ar@{=}[r]	\ar@{^{(}-_>}[d]	& \X \times \{t\}
\ar@{^{(}-_>}[d]	\\
G \subset	\cW  	\ar[r]^-\Pi				& \cV \ar[r]	^-\sigma				& \X \times \A^1
\ar[r]^-{q_1}	& \X		\\
E_0 \subset	\cT	\ar[r]^-{\pi_0} \ar@{_{(}-_>}@<-2.5ex>[u]	& P    \ar[r]	^-{p_1}\ar@{_{(}-_>}[u]	& \X_0 \times \0  \ar@{_{(}-_>}[u]
}
\]

We fix a positive rational number $c$ and define a $\Q$-line bundle $ \cF := ( \Pi^* \sigma^* q_1^* \L ) (-cG) $ on $\cW$
where $ q_1 : \X \times \A^1 \to \X $ is the first projection.
Then, by taking restriction to each component of fibers, we have
\begin{align*}
\cF |_\cT 	&= ( \pi_0^* p_1^*\L_0 ) (-cE_0) =: \cN,	\\
\cF |_{\hat{\X}} &= \L , 	\\
\cF |_\cB 	&= \pi^* \L (-cE) =: \cM ,	
\end{align*}
where $ p_1 : \X_0 \times \P^1 \to \X_0 $ is the first projection.
Thus, when $c$ is sufficiently small, 
$(\cT, \cN)$ can be seen as a test configuration for $(\X_0, \L_0)$ and the general fiber $(\cB, \cM)$ of $(\cW , \cF) \to \A^1$ is a polarized family.

Next we show how we can treat the above deformation algebraically (see also \cite{LX16,LZ18}).
Let
\begin{align*}
\cR 	&= \O_\X[t,I_{\X_0}t^{-1}] 		\\
	&= \O_\X[t] + I_{\X_0} t^{-1} + I_{\X_0}^2 t^{-2} + \cdots \subset \O_\X [t, t^{-1}]
\end{align*}
be the extended Rees algebra (see \cite[6.5]{Eis94}) of the ideal $I_{\X_0} \subset \O_\X$ defining $\X_0$.
Then, as in \cite[Lemma 4.1]{LZ18} we have isomorphisms of $ \O_\X $-algebras
\begin{align}
\cR \otimes_{\C[t]} \C[t, t^{-1}]	&\cong  \O_\X [t, t^{-1}]	, 	\notag	\\
\cR \otimes_{\C[t]} \C[t]/(t)		&\cong  \bigoplus_{k \geq 0} ( I_{\X_0}^k / I_{\X_0}^{k+1} )	\cong	 \O_{\X_0} [s], \label{isom1}
\end{align}
so that we can discribe the above deformation algebraically as
\[
\cV^{\circ}:= \cV \setminus \hat{\X}  = \Spec_\X \cR \to \mathbb{A}^1_t
\]
with the central fiber
\[
\cV^{\circ}_0 = \cV_0 \setminus \hat{\X} = \X_0 \times \A^1_s.
\]
Let $I \subset \O_\X$ be a sheaf of ideals which defines a subscheme supported in (the thickening of) $\X_0$.
For a non-zero local section $f$ of $I$ defined around the generic point of $\X_0$, let $k = \textrm{ord}_{\X_0}(f)$ be the minimum integer such that $ f \in I_{\X_0}^k $ and define 
\begin{align*}
\tilde{f} 	:= f t^{-k} \in I_{\X_0}^k t^{-k} 		&\subset \cR	, \\
\init(f) 	:= [f] \in I_{\X_0}^k / I_{\X_0}^{k+1} 	&\subset  \bigoplus_{k \geq 0} ( I_{\X_0}^k / I_{\X_0}^{k+1} ) \cong \O_{\X_0} [s] 
\end{align*}
as local sections of $\cR$ and $ \O_{\X_0} [s] $ respectively. Here, $[f]$ denotes the image of $f\in I_{\X_0}^k$ in $I_{\X_0}^k / I_{\X_0}^{k+1}$.
Moreover, we define the sheaf $ \tilde{I} $ on $\cW$ to be the sheaf of ideals locally generated by $ \{ \tilde{f}  \mid  f \in I \} $ in $ \cR $ and the sheaf $ \init (I) $ on $ \X_0 \times \A^1_s $ to be the sheaf of ideals locally generated by $ \{  \init (f)  \mid  f \in I \} $ in $ \O_{\X_0} [s] $.

\begin{lem}\label{degen-ideal}
In the above setting, the following hold:
\begin{enumerate}
\item We have the equalities
\begin{align*}
\tilde{I} 	&= I[t,t^{-1}] \cap \cR		\\
		&= I[t] + \frac{ I \cap I_{\X_0} }{t} + \frac{ I \cap { I_{\X_0} }^2 }{t^2} + \cdots \subset \cR,	\\
\init(I) 	&= \tilde{I} \cdot \O_{\X_0}[s] \subset \O_{\X_0}[s].
\end{align*}
\item If $ I \subset \O_\X$ defines a closed subscheme $ Z \subset \X $ set-theoretically supported in $\X_0$, then $ \tilde{I} $ defines $ \cZ \subset \cV^{\circ}$ (the strict transform of $Z \times \A^1 \subset \X \times \A^1 $ on $\cV$).
Also, $ \init (I) $ defines $ Z_0 \subset \X_0 \times \A^1_s $.
\item $\cR/\tilde{I}$ is flat as a sheef of $\C[t]$-modules, and so $\cZ$ is flat over $\A^1$.
\end{enumerate}
\end{lem}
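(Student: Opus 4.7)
The plan is to work locally on $\X$ and exploit the fact that $\X_0$ is a principal Cartier divisor: choosing a local parameter $u \in \O_{C,0}$ at $0$, pulling back gives $I_{\X_0} = (u)$ locally on $\X$. Then $\cR = \O_\X[t, ut^{-1}]$, and setting $s = u t^{-1}$ presents $\cR \cong \O_\X[s,t]/(st - u)$; in particular the restriction of the exceptional divisor of $\cV \to \X \times \A^1$ to the chart $\cV^{\circ}$ is cut out by $t = 0$.

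For (1), I would first prove $\tilde I = I[t, t^{-1}] \cap \cR$. Each local generator $\tilde f = f t^{-k}$ (with $k = \operatorname{ord}_{\X_0}(f)$, so $f = u^k g$ locally) satisfies $\tilde f = g s^k \in \cR$ and $\tilde f \in I[t,t^{-1}]$. Conversely, an element of $I[t,t^{-1}] \cap \cR$ has a Laurent expansion $\sum a_i t^i$ with $a_i \in I$, and membership in $\cR$ forces $a_i \in I_{\X_0}^{-i}$ for $i < 0$, so it is a sum of terms of the form $\tilde f$. This simultaneously yields the direct-sum decomposition $\tilde I = I[t] + (I \cap I_{\X_0})/t + (I \cap I_{\X_0}^2)/t^2 + \cdots$, which is the second equality. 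For the identification $\init(I) = \tilde I \cdot \O_{\X_0}[s]$, the computation $\tilde f = g s^k$ reduces modulo $t$ to $\bar g \cdot s^k \in \O_{\X_0}[s]$, which coincides with $\init(f) = [u^k g] \in I_{\X_0}^k/I_{\X_0}^{k+1}$ under the identification $I_{\X_0}^k/I_{\X_0}^{k+1} \cong \O_{\X_0}\cdot s^k$. Since $\tilde I$ is generated as an ideal by $\{\tilde f : f \in I\}$, its image in $\cR/(t)$ is exactly $\init(I)$.

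For (2), the classical description of the strict transform gives its ideal sheaf on $\cV^{\circ}$ as the saturation $(I \cR : t^\infty)$ of the total-transform ideal against the exceptional divisor ideal $(t)$. One verifies $\tilde I = (I \cR : t^\infty)$: the inclusion $\tilde I \subset (I \cR : t^\infty)$ follows from $t^k \tilde f = f \in I\cR$, while $g \in (I \cR : t^\infty)$ satisfies $t^N g \in I\cR \subset I[t,t^{-1}]$, hence $g \in I[t,t^{-1}] \cap \cR = \tilde I$. The subscheme $Z_0$ is then the central fiber of $\cZ$ over $\A^1_t$, cut out in $\X_0 \times \A^1_s = \Spec(\cR/(t))$ by $\init(I)$. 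For (3), since $\C[t]$ is a PID, flatness of $\cR/\tilde I$ over $\C[t]$ amounts to torsion-freeness: if $(t - c)g \in \tilde I = I[t,t^{-1}] \cap \cR$ for some $g \in \cR$ and $c \in \C$, then $(t-c)g \in I[t,t^{-1}]$, and since $I[t,t^{-1}] \cong I \otimes_\C \C[t,t^{-1}]$ is torsion-free over the domain $\C[t,t^{-1}]$, we conclude $g \in I[t,t^{-1}] \cap \cR = \tilde I$.

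The main subtlety is the bookkeeping in (1): verifying that the image of $\tilde f$ in $\cR/(t)$ canonically recovers $\init(f)$, since the explicit formula $\tilde f = g s^k$ depends on the choice of uniformizer $u$, whereas both $\tilde I$ and $\init(I)$ are defined intrinsically. Once this identification is clean, (2) and (3) are direct consequences of (1) combined with the standard algebraic characterization of strict transforms under blow-ups.
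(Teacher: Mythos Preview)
Your argument follows essentially the paper's route. Parts (1) and (2) match: the paper proves $\tilde I = I[t,t^{-1}]\cap\cR$ by showing $ft^{-k}\in\tilde I$ for every $f\in I\cap I_{\X_0}^k$ (using the trick $ft^{-k}=\widetilde{(f+g)}-\tilde g$ when $\operatorname{ord}_{\X_0}(f)>k$, which is what lies behind your phrase ``a sum of terms of the form $\tilde f$''), and it identifies $\cZ$ with the scheme-theoretic closure of $Z\times\C^*$ in $\cV^\circ$, which is exactly your saturation $(I\cR:t^\infty)$.

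There is one slip in (3). You deduce $g\in I[t,t^{-1}]$ from $(t-c)g\in I[t,t^{-1}]$ by citing torsion-freeness of $I[t,t^{-1}]$ over $\C[t,t^{-1}]$, but that is not the relevant property: torsion-freeness of a submodule says nothing about whether an ambient element whose multiple lands in it must itself lie in it. What you need is that the \emph{quotient} $\O_\X[t,t^{-1}]/I[t,t^{-1}]\cong(\O_\X/I)\otimes_\C\C[t,t^{-1}]$ is torsion-free (indeed free) over $\C[t,t^{-1}]$, so that $(t-c)\bar g=0$ forces $\bar g=0$. With this correction your argument for (3) works and is a tidy alternative to the paper's, which instead checks directly, using the graded description $\tilde I=\sum_{k\ge 0}(I\cap I_{\X_0}^k)t^{-k}$, that $t$ is a non-zero-divisor on $\cR/\tilde I$.
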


\begin{proof}
(1) $\tilde{I} \subset I[t,t^{-1}] \cap \cR$ is clear by the definition.
In order to see the opposite inclusion, it suffices to show that $ft^{-k} \in \tilde{I}$ for any $f \in I \cap I_{\X_0}^k$.
If $\textrm{ord}_{\X_0}(f) = k$, this follows from the definition of $\tilde{I}$. If $\textrm{ord}_{\X_0}(f) > k$, take any $g \in I$ such that $\textrm{ord}_{\X_0}(g) = k$, then we get $ft^{-k} = \widetilde{(f+g)} - \tilde{g} \in \tilde{I}$. 
Thus we obtain the first equality.
The last equality follows since the image of $\tilde{f}$ in $ (\cR / t\cR) \cong \O_{\X_0}[s] $ is $[f] \in I_{\X_0}^k / I_{\X_0}^{k+1}$.

(2) By the first equality in (1), $\tilde{I}$ is the largest ideal in $\cR$ among ideals which coincide with $I[t,t^{-1}]$ when they are extended to $\O_\X[t,t^{-1}]$. So $\tilde{I}$ defines the scheme theoretic closure of $Z \times \C^*$ in $\cV$, which is nothing but $\cZ$.
It also follows that $\init (I) $ defines $ Z_0 $ from the last equality in (1).

(3) is in \cite[Lemma 4.1]{LZ18} and can be proved exactly in the same way as \cite[Lemma 4.1]{LX16}, but here we provide a direct proof.
The flatness is clear outside $0 \in \A_s^1$.
To show the flatness over $0 \in \A_s^1$, we only need to check that $t$ is a non-zero divisor in $\cR/\tilde{I}$, since $(t)$ is the only non-trivial ideal in the base $\C[t]_{(t)}$.
Take any $s \in \cR$ such that $st \in \tilde{I}$. 
Writing down as $s = \sum_i f_i t^{-i}$, we have $f_i \in I_{\X_0}^i$ for $i \ge 0$.
On the other hand $st =  \sum_i f_i t^{-i+1} \in \tilde{I}$ implies $f_i \in I \cap I_{\X_0}^{i-1}$ for $i \ge 1$ and $f_i \in I$ for $i \le 0$.
Combining the above, we get $f_i \in I \cap I_{\X_0}^{i}$ for $i \ge 1$ and $f_i \in I$ for $i \le 0$, which shows $s\in \tilde{I}$.
\end{proof}

\subsection{Comparision of the CM degree}

In this subsection, we show equality of the CM degree of the polarized families under a certain assumption and then discuss when the assumption is satisfied.
We keep the notation in Subsection \ref{construct}.

\begin{prop}\label{CMeq}
Assume that the central fiber $\cW_0$ of $\cW$ in (\ref{totalfamily}) consists of only 2 irreducible components, that is, 
\[
\cW_0 = \hat{\X} \bigcup_{\X_0} \cT.
\]
Then, the equality
\[
\CM( \cT,\cN) = \CM (\cB,\cM) - \CM (\X,\L)
\]
holds.
\end{prop}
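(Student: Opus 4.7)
The plan is to exploit flatness of $\cW \to \A^1$ to equate Hilbert polynomials across its fibers, apply Mayer--Vietoris to the central-fiber decomposition given by the hypothesis, and then read off the CM-degree identity from the top two coefficients. Since $\cW$ is integral (as a blow-up of the integral scheme $\cV = \Bl_{\X_0 \times \0}(\X \times \A^1)$) and dominates the smooth curve $\A^1$, the morphism $\cW \to \A^1$ is flat, so for sufficiently divisible $k$,
\[
\chi(\cB, \cM^k) = \chi(\cW_0, \cF|_{\cW_0}^k).
\]

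Next, the hypothesis provides the decomposition $\cW_0 = \hat{\X} \cup_{\X_0} \cT$, which gives the Mayer--Vietoris short exact sequence
\[
0 \to \O_{\cW_0} \to \O_{\hat{\X}} \oplus \O_{\cT} \to \O_{\X_0} \to 0.
\]
Twisting by $\cF^k$ and using the restrictions $\cF|_{\hat{\X}} = \L$, $\cF|_{\cT} = \cN$, $\cF|_{\X_0} = \L_0$ established in Subsection \ref{construct}, additivity of Euler characteristics yields the key identity
\[
\chi(\cB, \cM^k) = \chi(\X, \L^k) + \chi(\cT, \cN^k) - \chi(\X_0, \L_0^k).
\]

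Writing the top two coefficients of the Hilbert polynomials of $(\X_0, \L_0)$, $(\X, \L)$, $(\cB, \cM)$, $(\cT, \cN)$ as $(a_0, a_1)$, $(b_0, b_1)$, $(b'_0, b'_1)$, $(c_0, c_1)$ respectively, the above identity forces $b'_0 - b_0 = c_0$ and $b'_1 - b_1 = c_1 - a_0$. Substituting into Definition \ref{CM}, the $(1-g(C))a_0^2$ contributions cancel in the difference $\CM(\cB, \cM) - \CM(\X, \L)$, and the residual $a_0^2$ coming from the shift in $b'_1$ matches the $(1 - g(\P^1))a_0^2 = a_0^2$ term in $\CM(\cT, \cN)$, since $(\cT,\cN)$ is a family over $\P^1$. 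This produces the desired equality.

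The step I expect to be most delicate is justifying the Mayer--Vietoris sequence exactly in the form above: one must know that $\cW_0$ is reduced with no embedded points, that $\hat{\X}$ and $\cT$ each occur with multiplicity one, and that the scheme-theoretic intersection $\hat{\X} \cap \cT$ is precisely $\X_0$ (rather than some thickening thereof). The hypothesis of the proposition pins down the underlying components, but a local check around the gluing locus -- plausibly via the extended Rees algebra description of Subsection \ref{construct} -- is required to pin down the precise scheme structure so that the short exact sequence (and hence the Euler-characteristic identity) is valid as stated.
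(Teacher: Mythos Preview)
Your proof is correct and follows essentially the same route as the paper: flatness of $\cW\to\A^1$ plus the decomposition of $\cW_0$ give the Euler-characteristic identity $\chi(\cB,\cM^k)=\chi(\X,\L^k)+\chi(\cT,\cN^k)-\chi(\X_0,\L_0^k)$, and comparing the top two coefficients yields the CM-degree equality exactly as you wrote. The paper is simply terser, writing ``by flatness and the assumption'' in place of your explicit Mayer--Vietoris argument; regarding the delicate point you flag, the displayed equality $\cW_0=\hat\X\cup_{\X_0}\cT$ in the hypothesis is intended scheme-theoretically (and in the application it is supplied by Lemma~\ref{lemma3}, whose proof shows $\cW_0^\circ=\cT|_{\A^1_s}$ as schemes, while near $\hat\X$ the blow-up center $\cZ$ is absent so $\cW_0$ agrees with $\cV_0=\hat\X\cup_{\X_0}P$ there).
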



\begin{proof}
By flatness and the assumption, we have the equality
\[
\chi(\cT,\cN ) + \chi (\X,\L) - \chi (\X_0, \L_0) = \chi (\cB,\cM).
\]
Comparing the coefficient of $k^{n+1}$ and $k^n$, we get
\begin{align*}
b_0^\cT + b_0^\X = b_0^\cB 	,	\\
b_1^\cT + b_1^\X - a_0 = b_1^\cB,
\end{align*}
where $ b_i^\cT,b_i^\X,b_i^\cB $ are the coefficients of the expansion (\ref{b}) in Definition \ref{CM} for each family.
Notice that the coefficient $a_i$ of the expansion (\ref{a}) in Definition \ref{CM} is the same for each family.
Thus,
\begin{align*}
\CM( \cT, \cN) 	=& a_1 b_0^\cT - a_0 b_1^\cT + a_0^2						\\
			=& a_1 (b_0^\cB -b_0^\X) - a_0 (b_1^\cB - b_1^\X + a_0) + a_0^2		\\
			=& (a_1 b_0^\cB - a_0 b_1^\cB + (1-g(C))a_0^2) 					\\
				&- (a_1 b_0^\X - a_0 b_1^\X + (1-g(C))a_0^2)	\\
			=& \CM (\cB,\cM) - \CM (\X,\L).
\end{align*}
\end{proof}

We give a sufficient condition for the assumption in Proposition \ref{CMeq}.

\begin{lem}\label{lemma3}
Let $I$ be the ideal defining the closed subscheme $Z \subset \X$ supported in $\X_0$.
If $ \widetilde{I^m} = ( \tilde{I} )^m $ holds for any positive integer $m$,
then the central fiber $ \cW_0 $ of $ \cW $ in (\ref{totalfamily}) consists of only 2 components.
\end{lem}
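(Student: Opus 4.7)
The plan is to cover $\cV$ by the two opens $\cV^\circ = \Spec_\X \cR$ and an open neighborhood $U$ of $\hat{\X}$, and to verify on each chart that $\cW_0 = \Pi^{-1}(\cV_0)$ contributes only $\hat{\X}$ or the strict transform $\cT$ of $P$.

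First I would observe that $\cZ \cap \hat{\X} = \emptyset$ in $\cV$. Indeed, $\cZ$ is the closure of $Z \times \C^*$ in $\cV$, and since $Z \subset \X_0$ set-theoretically the points $(z,t) \in Z \times \C^*$ approach the blow-up center $\X_0 \times \0$ along the normal direction $[u:v] = [0:1]$ as $t \to 0$, while $\hat{\X}$ meets the exceptional divisor $P$ along the opposite section $\{[u:v] = [1:0]\}$. Thus $U := \cV \setminus \cZ$ is an open neighborhood of $\hat{\X}$ on which $\Pi$ is an isomorphism, and $\cW_0 \cap U = \cV_0 \cap U = \hat{\X} \cup (P \cap U)$.

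Second, on the Rees chart $\cV^\circ$ one has $\cW \cap \cV^\circ = \operatorname{Proj}_{\cV^\circ} \bigoplus_m \tilde I^m$, so its central fiber is
\[
\cW_0 \cap \cV^\circ = \operatorname{Proj}_{\X_0 \times \A^1_s} \bigoplus_m \bigl( \tilde I^m \otimes_{\C[t]} \C \bigr).
\]
The key step is to extend Lemma \ref{degen-ideal}(3) from $\tilde I$ to each power $\tilde I^m$: replacing $I$ by $I^m$ in that argument and using both the hypothesis $\widetilde{I^m} = \tilde I^m$ and the analog of Lemma \ref{degen-ideal}(1) applied to $I^m$, one shows that $t$ is a non-zero divisor on $\cR/\tilde I^m$, so $\cR/\tilde I^m$ is flat over $\C[t]$. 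Flatness then yields an injection $\tilde I^m / t \tilde I^m \hookrightarrow \O_{\X_0}[s]$ whose image is $(\tilde I \cdot \O_{\X_0}[s])^m = (\init I)^m$ by Lemma \ref{degen-ideal}(1). Therefore $\bigoplus_m (\tilde I^m \otimes_{\C[t]} \C)$ is isomorphic as a graded algebra to the Rees algebra $\bigoplus_m (\init I)^m$, and taking $\operatorname{Proj}$ identifies $\cW_0 \cap \cV^\circ$ with $\Bl_{Z_0}(\X_0 \times \A^1_s) = \cT \cap \cV^\circ$, a single irreducible component.

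Gluing the two charts yields $\cW_0 = \hat{\X} \cup \cT$, so $\cW_0$ has exactly two irreducible components. The main technical obstacle is the flatness upgrade from $\cR/\tilde I$ to $\cR/\tilde I^m$, which is precisely where the hypothesis $\widetilde{I^m} = \tilde I^m$ is essentially used; the remainder of the argument is geometric and formal.
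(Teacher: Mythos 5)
Your proposal is correct and follows essentially the same route as the paper: the core is identical, namely applying Lemma \ref{degen-ideal}(3) to $I^m$ to get flatness of $\cR/\widetilde{I^m}$, using the hypothesis $\widetilde{I^m}=(\tilde I)^m$ together with Lemma \ref{degen-ideal}(1) to identify $(\tilde I)^m\otimes_{\C[t]}\C[t]/(t)$ with $\init(I)^m$, and summing over $m$ to identify the central fiber of $\Bl_{\cZ}\cV^{\circ}$ with $\Bl_{Z_0}(\X_0\times\A^1_s)$. The only difference is that you make explicit the reduction to the chart $\cV^{\circ}$ via $\cZ\cap\hat\X=\emptyset$, which the paper leaves implicit when it passes to $\cW^{\circ}=\cW\setminus\hat\X$.
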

Geometrically, the assumption says that any thickening of $ \cZ $ is still flat over $\A^1$.

\begin{proof}
Define
\[
\cW^{\circ} := \cW \setminus \hat{\X} = \Bl_\cZ \cV^{\circ}	\to 	\A^1_t
\]
so that we need to prove that the central fiber $ \cW^{\circ}_0 $ coincides with the restriction $ \cT|_{\P^1 \setminus \{ \infty \}} = \Bl_{Z_0 \times \0} (\X_0 \times \A^1_s)$.
It is enough to show an isomorphism of $\cR$-algebra
\[
\Bigr( \bigoplus_{m\geq 0}  (\tilde{I})^m \Bigl) \otimes_{\C[t]} \C[t] / (t) \cong  \bigoplus_{m\geq 0} \init(I)^m
.
\]

From the assumption and the flatness, we have
\begin{align*}
(\tilde{I})^m \otimes_{\C[t]} \C[t]/(t)	&=	\widetilde{I^m} \otimes_{\C[t]} \C[t]/(t)	\\
						&\cong	\widetilde{I^m} \cdot \O_{\X_0}[s]			\\
						&=	(\tilde{I})^m \cdot \O_{\X_0}[s]			\\
						&=	\init(I)^m.
\end{align*}
Indeed, the first and the third equalities follow from the assumption and the second follows from the flatness of $\R/\widetilde{I^m}$.
Thus we get the assertion by 
taking the direct sum.

\end{proof}

\section{Proof of the main theorem}

The following lemma is needed to ensure that the assumption in Lemma \ref{lemma3} is satisfied in the setting of Theorem \ref{thm1}.

\begin{lem}\label{lemma2}
Let $A$ be a regular ring essentially of finite type over a field $k$. Assume $(h)\subset A$ is a prime ideal such that $A/(h)$ is also a regular ring and an ideal $I \subset A$ contains $h$. Then, for positive integers $j < m$, $I^m \cap (h^j) = h^j I^{m-j}$ holds.  
\end{lem}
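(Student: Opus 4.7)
The inclusion $h^j I^{m-j} \subseteq I^m \cap (h^j)$ is immediate from $h \in I$, so all the content lies in the reverse inclusion. My plan is to reduce to a complete local setting in which $A$ becomes a formal power series ring in $h$ over $A/(h)$, and then read off the desired equality from $h$-adic expansions.

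First I would localize at an arbitrary prime; the asserted equality of finitely generated ideals is local, and by faithful flatness of completion it is preserved and reflected by passage to $\hat{A}$. The hypotheses then read: $\hat{A}$ is a complete regular local ring with prime element $h$ such that $\hat{A}/(h)$ is regular, so $h$ is part of a regular system of parameters. Cohen's structure theorem (in the equicharacteristic case, applicable since the paper works over $\C$) then yields an isomorphism $\hat{A} \cong B[[h]]$ with $B := \widehat{A/(h)}$ regular local, so that every $f \in \hat{A}$ has a unique expansion $f = \sum_{k \geq 0} b_k h^k$ with $b_k \in B$. Letting $J \subset B$ denote the image of $\hat{I}$ under projection modulo $h$, a short check gives $\hat{I} = h\hat{A} + J\hat{A}$, using that any $f \in \hat{I}$ differs from its constant term in the $h$-expansion by a multiple of $h \in \hat{I}$.

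Expanding $\hat{I}^m = \sum_{i=0}^{m} h^i J^{m-i} \hat{A}$ and collecting coefficients of each power of $h$, I would then establish the concrete criterion: $f = \sum_k b_k h^k$ lies in $\hat{I}^m$ if and only if $b_k \in J^{m-k}$ for every $k < m$ (no constraint for $k \geq m$). Intersecting with $(h^j)$, which is characterized by $b_k = 0$ for $k < j$, and writing $f = h^j p$ with $p = \sum_\ell b_{j+\ell}\, h^\ell$, the combined condition becomes exactly $p \in \hat{I}^{m-j}$ by the criterion applied to $m-j$. This yields $\hat{I}^m \cap (h^j) = h^j \hat{I}^{m-j}$, and faithful flatness descends the equality back to $A$. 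The step I expect to require the most care is the criterion for membership in $\hat{I}^m$ in terms of $h$-adic coefficients; once that is in place the remainder is a direct bookkeeping.
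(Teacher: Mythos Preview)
Your proposal is correct and follows essentially the same route as the paper: reduce by localization and completion to $\hat A \cong B[[h]]$ with $B$ a power series ring, use that $\hat I$ is generated by $h$ together with elements of $B$, and then read off the inclusion from $h$-adic expansions. The only packaging difference is that you phrase the core step as a clean coefficient criterion (``$f=\sum_k b_k h^k\in\hat I^m$ iff $b_k\in J^{m-k}$ for $k<m$''), whereas the paper performs an explicit iterative rewriting of a representation $f=\sum_i x_1^{m-i}g_i$ to force each $g_i$ into $B$; both unwind to the same computation.
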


\begin{proof}
The inclusion $ h^j I^{m-j} \subset I^m \cap (h^j) $ is clear, so we prove the opposite inclusion.
First we may assume $A$ is complete by taking completion with respect to its maximal ideal.
Let $ \{ x_2, \cdots, x_n \} $ denote the lift of regular sequence of parameter of $A/(h)$ to $A$, then $\{x_1, x_2, \cdots, x_n \}$ is a regular sequence of parameter of $A$ where we define $x_1 = h$, which induces the isomorphism
$A \cong k[[x_1, x_2, \cdots, x_n]]$ (see \cite[\S 28 the proof of Lemma 1]{Mat}).
So we replace $A$ by a formal power series ring $k[[x_1, x_2, \cdots, x_n]]$ and $h$ by $x_1$.
Then we can write $ I = (x_1, f_1, \cdots,  f_s) $, where each $f_i$ is a formal power series of $x_2, \cdots, x_n$. 
Let $ B = k[[x_2, \cdots, x_n]] $ be a subalgebra of $A$, and 
$J$ be an ideal in $A$ generated by $ f_1, \cdots, f_s $.
Let $ f \in A $ be an element of $ I^m \cap (x_1^j) $.
Since $ f \in I^m$, we can write
\[
f = x_1^m g_0 + x_1^{m-1} g_1 + \cdots + x_1 g_{m-1} + g_m, \ g_i \in J^i.
\]
We may take each $g_i$ from $B$ for $ i > 0$.
Indeed, we can write
\[
g_i = \sum_{1 \leq k_1 \leq \cdots \leq k_i \leq s} f_{k_1} \cdots f_{k_i} F_{\underline{k}}, \ 
F_{\underline{k}} \in A,
\]
where $\underline{k}$ denotes a pair $(k_1,\cdots, k_i)$.
By decomposing as
\[
F_{\underline{k}} = G_{\underline{k}} + x_1H_{\underline{k}}, \ G_{\underline{k}} \in B, H_{\underline{k}} \in A,
\]
we get
\[
g_i = \sum_{1 \leq k_1 \leq \cdots \leq k_i \leq s} f_{k_1} \cdots f_{k_i} G_{\underline{k}}
+x_1 \sum_{1 \leq k_1 \leq \cdots \leq k_i \leq s} f_{k_1} \cdots f_{k_i} H_{\underline{k}}.
\]
The first term is an element of $ B \cap J^i $ since $ f_1, \cdots, f_s $ are elements in $B$.
Replace $ g_i $ by $\sum f_{k_1} \cdots f_{k_i} G_{\underline{k}} \in B $ and $g_{i-1} $ by $g_{i-1}+ \sum f_{k_1} \cdots f_{k_i} H_{\underline{k}} \in J^i$, and repeat this for $ i = m, m-1, \cdots, 1$.
Thus we can assume $ g_i \in B $ for $ i>0 $.
Then we have
$g_i = 0 $ for $i > m-j $ since $ f \in (x_1^j) $.
So we get
\[
f = x_1^{j} ( x_1^{m-j} g_0 + \cdots + g_{m-j}) \in x_1^j I^{m-j}
\]
as desired.
\end{proof}

We now prove Theorem \ref{thm1}.

\begin{proof}[Proof of Theorem \ref{thm1}]

Let $ Z \subset \X_0 $ be a proper closed subscheme and $ c \in (0, \epsilon(Z, \L_0) ] $ be a rational number.
First we assume $ c \in (0, \epsilon(Z, \L_0) ) $.
Take the blow-up
\begin{align*}
\pi_0 : 	&\Bl_{ Z \times \0 } ( \X_0 \times \A^1 ) \to \X_0 \times \A^1	,\\
\pi:		&\Bl_Z  \X \to \X ,
\end{align*}
and denote the Cartier exceptional divisor as $E_0$ and $E$ respectively.
Then, $ (\cT_Z, \L_{Z,c}) := \bigl( \Bl_{ Z \times \0 } ( \X_0 \times \A^1 ) , \pi_0^* p_1^* \L_0 (-cE_0) \bigr)	 \to \A^1 $ is a test configuration by Lemma \ref{lem1}.
Let $ (\cT, \cN) = (\overline{\cT_Z}, \overline{\L_{Z,c}}) \to \P^1 $ be a natural compactification.
Also, $ (\cB,\cM) := \bigl( \Bl_Z \X, \pi^* \L (-cE) \bigr) \to C$ is a polarized family.
Indeed, following the notation in Subsection \ref{construct}, since $\cW $ is a flat family over a surface $ \Bl_{(0,0)} ( C \times \A^1 ) $ (by arguments below) and ampleness is an open condition,  the ampleness of $\cM$ over $C$ follows from that of $\cN$ on $\cT$ over $\P^1$.

Let $ I $ and $ I_{\X_0} $ be the ideal defining $ Z $ and $\X_0$ in $ \X $, respectively.
Then, the equality
\begin{equation}\label{eq1}
 I^m \cap I_{\X_0}^j = I_{\X_0}^j I^{m-j}
\end{equation}
holds for any positive integers $j \leq m$.
Indeed, we may check this locally at a point $x$ in $\X_0$, so let $ A := \O_{x,\X} $. Then $ A $ is regular since $\X_0$ and $C$ are both smooth and $\X \to C$ is flat. Moreover the restriction of $I_{\X_0}$ to $ \Spec A \subset \X$ is a principal prime ideal $ (h) $ of $A$ and the restriction of $I$ is an ideal containing $h$, since $ Z $ is \emph{scheme theoretically} supported in $\X_0$. Since $\X_0$ is smooth, $A/(h)$ is regular.
Thus, we can apply Lemma \ref{lemma2} and get the equality.

Now we construct the deformation of $(\cB,\cM)$ to $ (\cT,\cN) \cup_{\X_0} (\X,\L) $ whose total family is $(\cW, \cF)$ in Subsection \ref{construct}.
Using the algebraic description of the deformation in Subsection \ref{construct}, $ \tilde{I} $ can be written as
\[
\tilde{I} = I[t] + \frac{I_{\X_0}}{t} + \frac{{I_{\X_0}}^2}{t^2} + \cdots \subset \cR,
\]
since $I_{\X_0} \subset I$.  So $ ( \tilde{I} )^k $ can be computed as
\[
( \tilde{I} )^k = I^k[t] + I^{k-1} \frac{I_{\X_0}}{t} + I^{k-2} \frac{ { I_{\X_0} }^2}{t^2} + \cdots \subset \cR.
\]
On the other hand, similarly we can write as follows:
\[
\widetilde{I^k} = I^k[t] + \frac{ I^k \cap I_{\X_0} }{t} + \frac{ I^k \cap { I_{\X_0} }^2 }{t^2} + \cdots \subset \cR.
\]
Thus, we can see that
\[
\widetilde{I^k} = ( \tilde{I} )^k
\]
holds for any positive integer $k$ by the equality (\ref{eq1}).
By Lemma \ref{lemma3}, the central fiber $ \cW_0 $ of the total family $ \cW \to \A^1 $ consists of only 2 irreducible components, and so we can apply Proposition \ref{CMeq} to get the equality
\[
\CM (\cT,\cN) = \CM (\cB,\cM) - \CM (\X,\L) .
\]
But by the assumption on the minimization of CM degree, we have
\[
\CM (\cB,\cM) - \CM (\X,\L) 	\geq 0,
\]
which implies the inequality
\[
\DF (\cT_Z, \L_{Z,c} ) = \CM (\cT, \cN) \geq 0,
\]
where the first equality follows from Remark \ref{DFisCM}.
Thus we can get the desired slope inequality by Theorem \ref{slopeisDF}.

For $ c = \epsilon(Z, \L_0) $, the slope inequality follows from the above argument and the continuity of slope of $Z$ with respect to $c$.
\end{proof}


\end{document}